\documentclass[12pt]{amsart}
\usepackage{fullpage}
\usepackage[utf8]{inputenc}
\usepackage[english]{babel}
\usepackage{amsthm}
\usepackage{amsmath}
\usepackage{amssymb}
\usepackage{multirow}
\usepackage{hyperref}

\newtheorem{theorem}{Theorem}[section]

\newtheorem{corollary}{Corollary}[theorem]
\newtheorem{lemma}[theorem]{Lemma}
\newtheorem{proposition}[theorem]{Proposition}
\newtheorem*{remark}{Remark}

\newcommand{\QQ}{\mathbb{Q}}
\newcommand{\RR}{\mathbb{R}}
\newcommand{\ZZ}{\mathbb{Z}}
\newcommand{\CC}{\mathbb{C}}

\DeclareMathOperator{\Nm}{Nm}

\DeclareMathOperator{\Norm}{Norm}

\DeclareMathOperator{\Tr}{Tr}

\DeclareMathOperator{\Hom}{Hom}

\DeclareMathOperator{\GL}{GL}

\DeclareMathOperator{\SU}{SU}
\DeclareMathOperator{\Sp}{Sp}

\DeclareMathOperator{\MT}{MT}

\begin{document}

\title{Constructing a CM Mumford fourfold from Shioda's fourfold}
\author{Yuwei Zhu}
\maketitle

\begin{abstract}

In \cite{Shi81} Shioda proved that the Jacobian $A_S$ of the curve $y^2 = x^9 -1$ is a 4-dimensional CM abelian variety with codimension 2 Hodge cycles not generated by divisors. It was noted by Shioda that this behavior resembles the abelian varieties constructed by Mumford in \cite{Mum69}. We prove that Shioda's fourfold $A_S$ cannot be realized as a special case of Mumford's construction. However, by modifying its Hodge structure, we construct a basis for computing the period matrix of a CM Mumford fourfold with multiplication by $\sqrt{-3}$.

\end{abstract}

\section{Introduction}

In \cite{Mum69} Mumford constructed families of abelian fourfolds with exceptional Hodge cycles in their self-products (see \cite{Moo04} Example 5.9). Although the families constructed by Mumford are one-dimensional, whether they intersect with the moduli space of Jacobians of genus 4 curves is unknown (Problem 1 in \cite{EMO00}). It was noted by Shioda in \cite{Shi81} (Section 2) that, up to isogeny, the Jacobian $A_S$ of the genus 4 curve $y^2 = x^9 -1$ demonstrates similar behavior as Mumford's construction, which naturally leads to the question as to whether it can be realized as a special point of Mumford's loci. The story of CM Mumford fourfolds remains mysterious, with some results proven over local places by Noot in \cite{Noo00}.

In this paper we will introduce the Mumford-Tate group of CM abelian varieties, some basic facts about Mumford-Tate groups regarding products of abelian varieties and dual abelian varieties, and calculate the corresponding representations of CM Mumford fourfold (denoted $A_M$) and $A_S$ respectively. By showing that $A_S$ and general $A_M$ does not share the same CM type, we prove the following:

\begin{theorem}
 $A_S$ itself is not a special case of Mumford's construction (i.e. it is not parameterized by any Shimura curve that parameterizes Mumford constructions). With respect to the canonical basis given by Shioda, a CM Mumford fourfold $A_M$ can be constructed by twisting the polarization of $A_S$ by a fundamental unit in $\ZZ(\zeta_9 + \overline{\zeta_9})$.
\end{theorem}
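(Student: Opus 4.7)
The plan is to exploit the fact that a CM abelian variety is determined up to isogeny by its CM type, so both claims of the theorem reduce to explicit manipulations of CM types computed in the previous sections. Recall that for a CM Mumford fourfold $A_M$ the Mumford--Tate group is a quotient of a triple product of imaginary quadratic tori acting through a tensor product of three characters; this forces $\Phi_M$ to factor as a product of three one-dimensional CM types on a compositum of three imaginary quadratic extensions of a totally real cubic. The CM type of $A_S$ on the other hand can be read off directly from the action of the automorphism $(x,y)\mapsto(\zeta_9 x,y)$ on the basis of holomorphic differentials $x^i\,dx/y$ for $i=0,\dots,3$.

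For the first claim I would compare $\Phi_S$ with the admissible list of $\Phi_M$. The obstruction is already visible in the isogeny decomposition of $A_S$ under the $\zeta_9$-action: the $\Phi_3$-isotypic part contributes a CM elliptic curve with a rigid $\QQ(\sqrt{-3})$-factor, whereas every CM point on a Shimura curve of Mumford type treats the three $\SL_2$-tensor factors symmetrically. Making this precise, no Galois conjugate of $\Phi_S$ admits the tensor-factorised form required by $\Phi_M$, so that $A_S$ cannot be a special point of any Shimura curve of Mumford type.

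For the second claim I would keep the underlying rational vector space $H_1(A_S,\QQ)$, its CM-algebra action, and Shioda's canonical integral basis all fixed, and vary only the polarization. The polarization corresponds to a purely imaginary element $\delta$ of the CM algebra via $E(ax,y)=E(x,\bar a y)$; replacing $\delta$ by $u\delta$ for a unit $u\in\ZZ[\zeta_9+\overline{\zeta_9}]^\times$ preserves the $F$-compatibility but can spoil positivity. At each real place of the totally real cubic where $u$ is negative, positivity of the associated Hermitian form is restored only by flipping the corresponding embedding in the CM type, so the net effect of the twist is to change $\Phi_S$ at exactly those places. Choosing a fundamental unit whose sign pattern realises the set of flips needed to convert $\Phi_S$ into a tensor-factorised $\Phi_M$ then yields a polarised abelian fourfold whose period matrix is explicit in Shioda's basis and whose Hodge structure is of CM Mumford type.

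The main obstacle is producing the correct fundamental unit. The totally positive units in $\ZZ[\zeta_9+\overline{\zeta_9}]^\times$ form a subgroup of the rank-two unit group, and one has to verify that the signature map actually hits the sign pattern prescribed by the CM-type comparison. After locating such a unit one also needs to check that the twisted alternating form still takes integer values on Shioda's lattice and satisfies the Riemann relations, ensuring that the resulting variety is a genuine polarised abelian variety and not merely a rational Hodge structure.
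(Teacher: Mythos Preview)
Your approach to the second claim---twisting the polarization by a totally real unit to flip embeddings in the CM type---is exactly the paper's argument, and your remarks about matching the sign pattern of a fundamental unit to the required set of flips, and then checking the Riemann relations, are on point (the paper is in fact less careful here, simply asserting that a primitive unit of $\mathcal{O}_K$ does the job).

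For the first claim, however, your proposed obstruction does not work. You locate it in the isogeny decomposition of $A_S$, noting that the $\Phi_3$-isotypic part splits off a rigid $\QQ(\sqrt{-3})$-elliptic curve, and contrast this with an alleged symmetry among the three $\SL_2$ tensor factors at CM points of Mumford type. But a CM Mumford fourfold arising from a quaternion algebra split by $K(\sqrt{-D})$ shares precisely these features: its Mumford--Tate group is the same rank-$3$ torus $\{x\in K(\sqrt{-D})^\times : \Nm_{K(\sqrt{-D})/K}(x)=1\}$, and its Hodge structure likewise decomposes as a CM threefold times a CM elliptic curve with multiplication by $\QQ(\sqrt{-D})$. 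So neither the isogeny decomposition nor any symmetry of the tensor factors separates $\Phi_S$ from $\Phi_M$; both representations have the cyclic Galois group permuting the three $U(1)$ factors in the same way.

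The paper's invariant is different and sharper: the signature $\{n_\sigma,n_{\sigma'}\}$ of the $\QQ(\sqrt{-3})$-action on $H^{1,0}$. CM Mumford fourfolds are of Weil type, so $\{n_\sigma,n_{\sigma'}\}=\{2,2\}$, whereas reading off the eigenvalues $\zeta_9,\zeta_9^2,\zeta_3,\zeta_9^4$ on Shioda's basis gives $\{n_\sigma,n_{\sigma'}\}=\{1,3\}$ for $A_S$. This is the missing ingredient for the first half; once you have it, the second half becomes exactly the observation that flipping one embedding of $\QQ(\zeta_9)$ converts $\{1,3\}$ into $\{2,2\}$, which is what a fundamental unit negative at a single real place accomplishes.
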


\renewcommand{\abstractname}{Acknowledgments}

\begin{abstract}

The author would like to thank Prof. Kiran Kedlaya for pointing to the subject of Mumford's construction; Prof. Andrew Sutherland for sharing his knowledge on the Mumford-Tate representation of Shioda's fourfold; Prof. John Voight, Prof. Everette Howe and Prof. Christophe Ritzenthaler for the valuable feedback of a error in the previous version of this paper; Prof. Yuri Zarhin for pointing to the language of Weil type abelian varieties; and Prof. Brendan Hassett for his valuable suggestions and kind support. The author is supported by NSF grants DMS-1551514 and DMS-1701659.

\end{abstract}
\section{General facts about Mumford-Tate groups of CM abelian varieties}

\subsection{Mumford-Tate representations}

The main reference for this section is \cite{Del82} chapter 3. 

We begin with the classical story: a Hodge structure on a rational vector space $V$ of weight $1$ is a homomorphism of real algebraic groups:
$$h: \CC^{\times} \rightarrow GL(V\otimes \RR)$$
such that $V \otimes \CC$ admits a decomposition into $ V^{1,0} \oplus V^{0,1}$ satisfying:

\begin{enumerate}
 \item $V^{1,0}$ is the complex conjugate of $V^{0,1}$.
 \item $h(z)$ acts by multiplication by $z^{-1}$ (resp. $\overline{z}^{-1}$) on $V^{1,0}$ (resp. $V^{0,1}$). 

\end{enumerate}

Such a pair $(V, h)$ is called a $\QQ$-rational Hodge structure of weight $1$. If we restrict $h$ to the set of norm $1$ complex numbers (denoted $\mathbb{U}$), then the Mumford-Tate group $MT(V)$ of $(V, h)$ is defined to be the smallest algebraic subgroup of $\GL(V)$ (over $\QQ$) such that $h|_{\mathbb{U}}: \mathbb{U} \rightarrow \GL(V \otimes \RR)$ factors through $MT(V) \otimes {\RR}$. This yields a rational representation that will be denoted as $\rho: \MT(V) \hookrightarrow \GL(V)$. We will use the term Mumford-Tate representation for the pair $(G, \rho)$.

\begin{remark}
This definition only works for Hodge structures of weight $1$. Moreover, here we define what is usually called \emph{special Mumford-Tate group}, as opposed to general Mumford-Tate group in \cite{Del82}.
\end{remark}

Moreover, given a weight $1$ Mumford-Tate representation $(G, \rho)$, we can recover $V$ by looking at the target space of $\rho$; similarly we can recover $h$ by restricting $\rho$ to the maximal compact torus of $G_{\RR}$. Since $h$ decomposes $V_{\CC}$ into eigenspaces of weight $(1,0)$ and $(0,1)$, this gives us a way of assigning Hodge numbers to the tensor construction of $V_{\CC}$, similarly for $G$-invariant subspaces of such constructions. In fact, we can take $G$-invariant subspaces in $V^{\otimes n}$ to be the definition of $\QQ$-Hodge substructures in $V^{\otimes n}$.

By general Tannakian formalism (Corollary 4.5 in \cite{Moo04}), we have an equivalence of categories between \textbf{Rep}$_{\QQ}(MT(V))$ and all $\QQ$-Hodge substructures obtained by tensor operations on $V$. 

From this the following lemma is immediate:

\begin{lemma}
Given a reductive $\QQ$ group $G$ and two representations $\rho_1, \rho_2$ into $GL(V)$, if there exists a $\rho_i$-invariant subspace $W$ ($i = 1, 2$) in $V^{\otimes n}$ such that its Hodge numbers are different under $\rho_1$ and $\rho_2$, then the two representations cannot be equivalent.
\end{lemma}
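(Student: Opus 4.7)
My plan is to prove the contrapositive via the Tannakian correspondence recalled above. Suppose $\rho_1$ and $\rho_2$ are equivalent as $G$-representations, so there exists $\phi \in \GL(V)$ with $\rho_2(g) = \phi\,\rho_1(g)\,\phi^{-1}$ for all $g \in G$; I aim to show that any subspace $W \subset V^{\otimes n}$ stable under both $\rho_i^{\otimes n}$ must carry the same Hodge numbers under $\rho_1$ and $\rho_2$, contradicting the hypothesis.

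First, I pass $\phi$ to tensor powers: $\phi^{\otimes n}$ intertwines $\rho_1^{\otimes n}$ with $\rho_2^{\otimes n}$. Specializing at $h(z) \in G_{\RR}$ for $z \in \UU$, the operator $\phi^{\otimes n}$ sends each $\rho_1^{\otimes n}(h(z))$-eigenspace on $V^{\otimes n} \otimes \CC$ bijectively onto the corresponding $\rho_2^{\otimes n}(h(z))$-eigenspace. Since the $(p,q)$-Hodge piece is precisely the joint eigenspace on which $h(z)$ acts by $z^{-p}\bar{z}^{-q}$, this exhibits $\phi^{\otimes n}$ as an isomorphism of $\QQ$-Hodge structures $(V^{\otimes n}, h_1) \to (V^{\otimes n}, h_2)$, where $h_i = \rho_i^{\otimes n} \circ h$.

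Next, I invoke the Tannakian equivalence, which implies that the Hodge numbers of any sub-$G$-representation depend only on its isomorphism class as a $G$-representation. Under $\phi^{\otimes n}$, the $\rho_1$-subrepresentation $W$ is carried onto the $\rho_2$-subrepresentation $\phi^{\otimes n}(W)$, and these are isomorphic as $G$-reps, so the Hodge numbers of $W$ under $\rho_1$ equal those of $\phi^{\otimes n}(W)$ under $\rho_2$. It then remains to compare the two $\rho_2$-subrepresentations $W$ and $\phi^{\otimes n}(W)$ sitting inside $V^{\otimes n}$: by the semisimplicity guaranteed by the reductiveness of $G$, both decompose into $\rho_2$-isotypic components, and one must verify their multiplicities agree. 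Once this is in place, Tannakian invariance forces the Hodge numbers of $W$ under $\rho_1$ and $\rho_2$ to coincide, yielding the desired contradiction.

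The main obstacle I expect is precisely this last comparison — verifying that the $\rho_2$-subrepresentation structure of $W$ matches (up to isomorphism) that of $\phi^{\otimes n}(W)$ inside the isotypic decomposition of $V^{\otimes n}$ under $\rho_2$. I anticipate this can be handled by tracking how $\phi^{\otimes n}$ permutes the $\rho_i$-isotypic decompositions of $V^{\otimes n}$, using the intertwining relation together with the hypothesis that $W$ is simultaneously $\rho_1$- and $\rho_2$-stable. With that step granted, the lemma follows immediately from the Tannakian principle that the isomorphism class of a sub-$G$-representation determines its Hodge numbers.
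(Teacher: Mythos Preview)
The paper does not give a proof beyond declaring the lemma ``immediate'' from the Tannakian equivalence. Your write-up is more explicit, and the first two steps are fine: the intertwiner $\phi^{\otimes n}$ is indeed an isomorphism of Hodge structures $(V^{\otimes n},h_1)\to(V^{\otimes n},h_2)$, and Tannakian formalism does say that the Hodge numbers of a sub-$G$-representation depend only on its isomorphism class. But you then correctly isolate the only substantive point --- that $W$ and $\phi^{\otimes n}(W)$ are isomorphic as $\rho_2$-subrepresentations, equivalently that $(W,\rho_1^{\otimes n}|_W)\cong(W,\rho_2^{\otimes n}|_W)$ as abstract $G$-modules --- and leave it unproved, saying only that you ``anticipate this can be handled'' and concluding ``with that step granted.'' That step is not a technicality; it is the entire content of the lemma, and everything preceding it is the formal Tannakian wrapper.

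The mechanism you propose for closing the gap (tracking how $\phi^{\otimes n}$ permutes isotypic components) does not obviously work: $\phi^{\otimes n}$ carries the $\rho_1$-isotypic decomposition to the $\rho_2$-isotypic decomposition, but nothing forces the fixed subspace $W$ to sit in matching pieces under the two actions --- simultaneous stability under $\rho_1$ and $\rho_2$ does not by itself make the two induced $G$-module structures on $W$ isomorphic (already over $\CC$ a one-dimensional eigenline inside a two-dimensional torus representation shows this can fail). In the paper's actual applications the subspace $W$ is always specified representation-theoretically (an isotypic component, a space of invariants), and in that situation the comparison really is automatic; if you want the lemma at the stated level of generality you must either add such a hypothesis on $W$ or supply a genuine argument. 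As written, your proposal names the right obstacle but does not surmount it.
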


Another classical theorem states that abelian varieties are determined up to isogeny by their weight $1$ Hodge structure. Therefore, in this paper we shall only consider the case when $(V,h)$ is Hodge structure of weight 1. Tannakian formalism states that this is equivalent to studying $(G, \rho)$ that can be realized as a Mumford-Tate representation of weight 1.

It is a general fact that for any representation $\rho$ of a reductive group we can associate it with a dual representation $\rho^{\vee}$. If we consider the maximal torus in $G$ containing the image of $h$, the cocharacter of $\rho^{\vee}$ with respect to that torus equals to the negative cocharacter of $\rho$. Since taking complex conjugation on $\mathbb{U}$ is the same thing as taking inverses, we have the following lemma:

\begin{lemma}
If $A$ is an abelian variety given by the Mumford-Tate representation $(G, \rho)$ up to isogeny, then the dual abelian variety $A^{\vee}$ of $A$ is given by $(G, \rho^{\vee})$.
\end{lemma}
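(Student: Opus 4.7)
The plan is to match both the Mumford-Tate group and the Hodge decomposition of $A^\vee$ with those prescribed by $(G,\rho^\vee)$. First I would recall the standard description of the dual abelian variety in Hodge-theoretic terms: the rational Hodge structure underlying $A^\vee$ is canonically the dual vector space $V^\vee:=\Hom_\QQ(V,\QQ)$ (modulo a Tate twist that is invisible to the special Mumford-Tate group, since that group records only $h|_\UU$), equipped with the Hodge decomposition obtained from the naive dual by swapping the $(1,0)$ and $(0,1)$ eigenspaces: $(V^\vee)_{A^\vee}^{1,0}=\Hom(V^{0,1},\CC)$ and $(V^\vee)_{A^\vee}^{0,1}=\Hom(V^{1,0},\CC)$. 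This swap is the Hodge-theoretic reflection of the classical identification $T_0 A^\vee \cong H^1(A,\mathcal{O}_A)$.

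Second, I would translate $(G,\rho^\vee)$ back into Hodge-theoretic data and verify it reproduces this decomposition. Pick a maximal torus $T\subset G_\CC$ containing the image of $h_\CC$ and let $\mu$ be the cocharacter through which $h_\CC$ factors; by the paper's convention, $\rho\circ\mu$ acts by $z^{-1}$ on $V^{1,0}$ and trivially on $V^{0,1}$. The cocharacter of $\rho^\vee$ is $-\mu$, so $\rho^\vee\circ\mu$ acts by $z$ on $(V^{1,0})^\vee$ and trivially on $(V^{0,1})^\vee$. Restricting to $\UU$ and using $z^{-1}=\bar z$ there, $\rho^\vee(h(z))$ acts on $(V^{1,0})^\vee$ by $\bar z^{-1}$ and on $(V^{0,1})^\vee$ by $z^{-1}$. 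Under the paper's weight-$1$ convention this is exactly a Hodge structure with $(V^\vee)^{1,0}=(V^{0,1})^\vee$ and $(V^\vee)^{0,1}=(V^{1,0})^\vee$, matching the first step.

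To conclude, I would observe that $\MT(V)=\MT(V^\vee)$ as an immediate consequence of the Tannakian formalism cited in the excerpt: each of $V$ and $V^\vee$ lies in the tensor subcategory of $\mathbf{Rep}_\QQ(G)$ generated by the other, so the two Mumford-Tate groups agree as subgroups of $\GL(V)$ and $\GL(V^\vee)$ respectively. Combined with the matching of Hodge decompositions, this identifies $A^\vee$ under the Tannakian equivalence with the representation $(G,\rho^\vee)$, which is precisely the assertion of the lemma.

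The one subtlety I would take pains to isolate rather than sweep aside is the Tate-twist bookkeeping in identifying $H_1(A^\vee,\QQ)$ with $V^\vee$: the true identification differs from the naive dualization by a Tate twist, but this twist acts only through the central $\GG_m$ of the general Mumford-Tate group and therefore disappears upon restriction to $\UU$. Consequently it affects neither $G$ nor $\rho^\vee$, and it is safe to suppress throughout the above argument; isolating this point is, in my view, the only nontrivial part of the proof.
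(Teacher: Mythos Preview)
Your proof is correct and follows essentially the same approach as the paper: the paper's argument is the terse observation that the cocharacter of $\rho^\vee$ is the negative of that of $\rho$, together with the fact that complex conjugation on $\UU$ coincides with inversion, which is exactly your second step. You have simply fleshed this out with the explicit Hodge-theoretic identification of $H_1(A^\vee,\QQ)$, the Tannakian verification that the Mumford-Tate group is unchanged, and the Tate-twist bookkeeping---all of which the paper leaves implicit.
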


\begin{remark}
The Mumford-Tate group of a weight 1 polarized Hodge structure always lies inside a symplectic group, whose representations are always self-dual. Hence we recover the classical fact that the dual abelian variety is isogenous to the origial abelian variety.
\end{remark}

We also need another general fact for Mumford-Tate groups:

\begin{proposition}
(Properties 2.1.4 in \cite{Moo17})Let $H_1$, $H_2$ be two $\QQ$-Hodge strucutres with Mumford-Tate groups $G_1, G_2$. Then the Mumford-Tate group of $H_1 \oplus H_2$ is an algebraic subgroup of $G_1 \times G_2$ and it admits surjective projections onto $G_1$ and $G_2$ respectively. \end{proposition}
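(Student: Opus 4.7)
The plan is to unwind the definition of the Mumford-Tate group as the smallest $\QQ$-algebraic subgroup of the ambient general linear group through which $h|_{\UU}$ factors, and then apply this minimality twice: once to confine $\MT(H_1 \oplus H_2)$ inside $G_1 \times G_2$, and once on each coordinate projection to obtain surjectivity.

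First I would observe that the Hodge structure on $H_1 \oplus H_2$ is by construction the direct sum of the two given Hodge structures, so the defining homomorphism is block-diagonal, $h(z) = (h_1(z), h_2(z))$. Embedding $\GL(H_1) \times \GL(H_2)$ block-diagonally inside $\GL(H_1 \oplus H_2)$ realizes $G_1 \times G_2$ as a $\QQ$-algebraic subgroup of $\GL(H_1 \oplus H_2)$. Since each $h_i|_{\UU}$ factors through $G_i \otimes \RR$ by definition of $G_i = \MT(H_i)$, the map $h|_{\UU}$ factors through $(G_1 \times G_2)\otimes\RR$. The defining minimality property of $\MT(H_1 \oplus H_2)$ then forces $\MT(H_1 \oplus H_2) \subseteq G_1 \times G_2$.

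Next, set $M := \MT(H_1 \oplus H_2)$ and let $\pi_i : G_1 \times G_2 \to G_i$ be the two coordinate projections. The image $\pi_i(M)$ is a $\QQ$-algebraic subgroup of $G_i$, and composing the factorization $h|_{\UU} \to M \otimes \RR$ with $\pi_i \otimes \RR$ shows that $h_i|_{\UU}$ factors through $\pi_i(M) \otimes \RR$. The minimality of $G_i = \MT(H_i)$ then gives $\pi_i(M) = G_i$, which is exactly the required surjectivity.

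The argument is essentially a formal consequence of the universal property in the definition of $\MT$, so I do not expect a real technical obstacle; the one subtlety worth flagging is that one must be careful to view $G_1 \times G_2$ genuinely as a $\QQ$-subgroup of $\GL(H_1 \oplus H_2)$, not merely as an abstract product of subgroups of $\GL(H_i)$, so that the minimality clause is being applied inside the correct ambient group.
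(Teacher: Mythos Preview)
Your argument is correct: it is exactly the standard minimality argument, applied first to the block-diagonal embedding $G_1\times G_2\hookrightarrow\GL(H_1\oplus H_2)$ and then to each projection $\pi_i$. The paper itself offers no proof of this proposition at all --- it merely cites it as Properties~2.1.4 in \cite{Moo17} --- so your write-up in fact supplies what the paper omits.
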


Combining the proposition with the previous lemma we have the following:

\begin{lemma}
If $A$ is an abelian variety given by the Mumford-Tate representation $(G, \rho)$, then $A \times A^{\vee}$ is given by the Mumford-Tate representation $(G, \rho \oplus \rho^{\vee})$.
\end{lemma}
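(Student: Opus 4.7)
The plan is to combine the preceding Proposition with the preceding Lemma on duals, and then exploit the fact that a single circle homomorphism controls both Hodge structures.

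First, by the Lemma on duals, the Mumford-Tate group of $A^\vee$ is the same $\QQ$-group $G$, and the distinguished homomorphism $h^\vee : \UU \to G_\RR$ is identical to that of $A$ — only the representation is changed from $\rho$ to $\rho^\vee$. Consequently, the Hodge structure on $V \oplus V^\vee$ that corresponds to $A \times A^\vee$ is given by the composite
\[
h_{A \times A^\vee} \colon \UU \xrightarrow{(h,h)} G_\RR \times G_\RR \xrightarrow{\rho \oplus \rho^\vee} \GL(V \oplus V^\vee)_\RR,
\]
and the image of $(h,h)$ visibly lies in the diagonal subgroup $\Delta(G) \subset G \times G$.

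Next, I would apply the Proposition to $H_1 = V$ and $H_2 = V^\vee$, obtaining that $\MT(A \times A^\vee)$ is a $\QQ$-algebraic subgroup of $G \times G$ that surjects onto each factor. On the other hand, since $\Delta(G)$ is a $\QQ$-subgroup of $G \times G$ containing the image of $(h,h)$, the minimality of the Mumford-Tate group forces
\[
\MT(A \times A^\vee) \subseteq \Delta(G).
\]
The diagonal projects isomorphically onto each copy of $G$, so the surjectivity onto each factor forces $\MT(A \times A^\vee) = \Delta(G)$, which is canonically isomorphic to $G$. Under this identification, $G$ acts on $V \oplus V^\vee$ via $\rho \oplus \rho^\vee$, which is exactly the claim.

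The only subtle point is step two: one must verify that $(h,h)$ really does factor through the \emph{diagonal} and not merely through some isogenous copy of $G$ inside $G \times G$. This relies on the observation from the Lemma on duals that the cocharacter of $h^\vee$ with respect to the ambient torus agrees with $h$ at the level of $\UU$ (because the dual representation's cocharacter is the negative cocharacter, and negation on $\UU$ coincides with complex conjugation, which is already absorbed when one restricts from $\CC^\times$ to $\UU$). Once that identification is in place, the remainder of the argument is a formal Tannakian manipulation.
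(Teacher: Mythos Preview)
Your argument is correct and follows essentially the same route as the paper: the paper's proof uses the Proposition to get a surjection of $\MT(A\times A^{\vee})$ onto $G$ and then observes that $G$ itself (acting via $\rho\oplus\rho^{\vee}$) already receives the embedding of $\UU$, which is exactly your diagonal containment $\MT(A\times A^{\vee})\subseteq\Delta(G)$ combined with the surjectivity to force equality. Your write-up simply makes the diagonal $\Delta(G)\subset G\times G$ and the identity $h^{\vee}=h$ explicit, which the paper leaves implicit in the phrase ``the representation $\rho\oplus\rho^{\vee}$ will have the correct embedding of $\UU$.''
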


\begin{proof} From the above proposition we know the Mumford-Tate group of $A \times A^{\vee} $ admits a surjection onto $G$. We claim $G$ itself is the group, for the representation $\rho \oplus \rho^{\vee} $ will have the correct embedding of  $\mathbb{U}$ because of the definition of dual representation. 
\end{proof}

\subsection{The case of CM abelian varieties, abelian varieties of Weil type}

In this section we recall Deligne's construction of CM abelian varieties for which the Mumford-Tate group is contained in an algebraic torus (see \cite{Del82} Example 3.7).  

By a CM-field we mean a quadratic totally imaginary extension of a totally real field; a CM-algebra is a finite product of CM-fields. Let $E = E_1 \times ... \times E_n$ be such an algebra. Then there exists an involution $\iota$ acting by complex conjugation on each of the factors of $E$. Let $F = F_1 \times ... \times F_n$ denote the subalgebra in $E$ fixed by $\iota$.

We denote $S$ for the set $\Hom_{\QQ}(E, \CC)$. Then a CM-type for $E$ is a subset $\Sigma \subset S$ such that $S = \Sigma \sqcup \iota\Sigma$. 

The complex structure is given by the following decomposition:
$$E \otimes_{\QQ} \CC \cong \CC^S = \CC^{\Sigma} \oplus \CC^{\iota\Sigma}.$$

Let $\CC^{\Sigma}$ be the $V^{1,0}$ space and $\CC^{\iota\Sigma}$ its complex conjugate. Then we can view $E$ as $H_1(A, \QQ)$ of some CM abelian variety $A$, for example, given by $A(\CC) = \CC^{\Sigma} / \Sigma(\mathcal{O}_E)$. The main theorem of complex multiplication states that simple CM abelian varieties up to isogeny are classified by all possible choices $(E, \Sigma)$ up to the Galois group action on the embeddings (cf. \cite{Mil06}). For a chosen $(E, \Sigma)$ that gives $A$ up to isogeny, the polarization on $H^1(A, \QQ)$ is given by $\psi: (x, y) \mapsto \Tr_{E/\QQ}(tx\overline{y})$, where $t \in E^{\times}$ satisfies $\Im(\sigma(t)) >0 $ for all $\sigma \in \Sigma$, and $t$ is totally imaginary.

\begin{lemma} The Mumford-Tate group $G$ of $A$ given by the CM-type $(E, \Sigma)$ lies inside $E^{\times}$. In particular, it is a torus that lies inside the $\QQ$ group $ U_E = \{ x \in E^{\times} |  \Nm_{E/F} (x) = 1 \}$.
\end{lemma}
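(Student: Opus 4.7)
The plan is to identify $E^\times$ explicitly as a $\QQ$-subgroup of $GL(V)$ where $V = E$ (viewed as a $\QQ$-vector space), verify that the Hodge cocharacter $h$ takes values in this subgroup, and then verify the norm-$1$ condition. The definition of $\MT(V)$ as the smallest $\QQ$-algebraic subgroup of $GL(V)$ through which $h|_\UU$ factors then gives both containments for free.

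First I would set up the embedding $E^\times \hookrightarrow GL_\QQ(E)$ by left multiplication; this is defined over $\QQ$ because $E$ is a $\QQ$-algebra, and it realizes $E^\times$ as the Weil restriction $\Res_{E/\QQ}\GG_m$, a $\QQ$-torus. The subgroup $U_E = \ker(\Nm_{E/F}: E^\times \to F^\times)$ is likewise a $\QQ$-subtorus since the norm map is $\QQ$-rational.

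Next I would compute $h$ explicitly. Under the isomorphism $E \otimes_\QQ \CC \cong \CC^\Sigma \oplus \CC^{\iota\Sigma}$, the action of $E^\times$ is diagonal: $x \in E$ acts on the $\sigma$-component of $\CC^\Sigma$ by $\sigma(x)$ and on the $\iota\sigma$-component of $\CC^{\iota\Sigma}$ by $\overline{\sigma(x)}$. Since $\CC^\Sigma = V^{1,0}$ and $\CC^{\iota\Sigma} = V^{0,1}$, the prescription that $h(z)$ act by $z^{-1}$ on $V^{1,0}$ and by $\overline{z}^{-1}$ on $V^{0,1}$ is precisely the action of the element $h(z) \in (E \otimes \RR)^\times$ whose image in $\prod_{\sigma \in \Sigma}\CC$ is $(z^{-1}, \dots, z^{-1})$. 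In particular $h$ factors through $E^\times \otimes \RR \subset GL(V \otimes \RR)$, so by the minimality definition of the Mumford-Tate group, $G \subset E^\times$.

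Finally, to refine this to $G \subset U_E$, I would apply $\Nm_{E/F}$ to $h(z)$. Complex conjugation $\iota$ swaps the two summands $\CC^\Sigma$ and $\CC^{\iota\Sigma}$, so $\iota(h(z))$ corresponds to $(\overline z^{-1})$ componentwise and $h(z)\cdot \iota(h(z))$ has $\sigma$-component $z^{-1}\overline z^{-1} = |z|^{-2} = 1$ for $z \in \UU$. Hence $h(\UU) \subset U_E(\RR)$, and minimality forces $G \subset U_E$. No step here is a real obstacle; the only point demanding care is checking that the diagonal $E$-action on $E \otimes \CC$ matches the Hodge cocharacter \emph{with the correct signs}, since a sign error would land in a twisted form of $U_E$ rather than $U_E$ itself.
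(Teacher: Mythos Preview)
Your argument is correct and is the standard one: you identify $E^\times$ as a $\QQ$-torus inside $GL(E)$, check that the cocharacter $h$ lands in $(E\otimes\RR)^\times$ via the diagonal action on $\CC^S$, and then compute $\Nm_{E/F}(h(z))=z^{-1}\overline z^{-1}=1$ for $z\in\UU$ so that minimality forces $G\subset U_E$.

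There is nothing to compare with, because the paper does not actually prove this lemma; it is stated as a known fact, with the surrounding discussion attributing the construction to Deligne (\cite{Del82}, Example~3.7). Your write-up simply supplies the omitted details. One small cosmetic point: the caveat at the end about ``landing in a twisted form of $U_E$'' is a bit overstated---once you have $h(\UU)\subset (E\otimes\RR)^\times$ and $\Nm_{E/F}\circ h|_\UU\equiv 1$, the containment $G\subset U_E$ is automatic and there is no twisted form in sight---but this does not affect the validity of the argument.
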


Note that, as we shall see in a minute, this does not imply $G$ is the whole $U_E$. However, the proposition from the previous section says that $G$ admits surjection onto each $U_{E_i}$. 

A general fact about $U(1)^n$ states that any map $U(1)^n \rightarrow U(1)$ is of the form $(g_1, ... g_n) \mapsto g_1^{i_1} g_2^{i_2} ... g_n^{i_n} $. We will record this information by a vector $(i_1, i_2, ..., i_n)$. Similarly, for a map

$$\begin{array}{cccc}
\rho: & U(1)^n & \rightarrow & U(1)^m \\
 & (g_1, ... ,g_n) & \mapsto & (g_1^{i_{1,1}}...g_n^{i_{1,n}}, ..., g_1^{i_{m,1}}...g_n^{i_{m,n}})
\end{array}$$
We write $$\rho = \left ( \begin{array}{ccc}
i_{1,1} & ... & i_{1,n} \\
... & ... & ... \\
i_{m,1} & ... & i_{m,n}
\end{array} \right )
$$

In particular, if $U(1)^n$ is the real form of the Mumford-Tate group of a CM abelian variety, it is known that $U(1)^n$ always embeds into $\Sp_{2n}(\RR)$ by the following identification of the following weight vectors: $$\begin{array}{ccccc}
U(1)^n\otimes \CC & \hookrightarrow & (\SU(2) \otimes \CC)^n & \hookrightarrow & \Sp_{2n}(\CC) \\
& & \CC^2 \oplus \CC^2 \oplus ... \oplus \CC^2 (\text{n copies}) & \rightarrow & \CC^{2n} \\
& & (L_1, L_2, ... , L_n) &\rightarrow & (\pm L_1, \pm L_2, ... \pm L_n) \\  
\end{array}$$
Where the choice of $\pm$ sign in the $\Sp_{2n}$ representation records the choices we made within a pair of complex embeddings of $E$. We record this information by the group homomorphism that maps $U(1)^n$ to the maximal torus in $\Sp_{2n}$ with respet to which we assign the weights $L_i$.

We call a CM abelian fourfold $A$ of Weil type if for a chosen embedding of an imaginary quadratic field $\sigma: \QQ(\sqrt{-D}) \hookrightarrow \CC$ where $\QQ(\sqrt{-D})$ is a subfield of $End^0(A)$, the action of $t \in \QQ(\sqrt{-D})$ is of the form $(\sigma(t), \sigma(t), -\sigma(t), -\sigma(t))$ on $H^{1,0}(A)$. Moreover, the count of negative numbers (up to 2, since we can always apply conjugation to get the count of signs on dual abelian varieties) in this case will be an invariant that identifies the CM type of the underlying abelian variety.

\begin{remark}\label{num}In classical literature (e.g. \cite{Zar99}), this information is recorded by a pair of unordered numbers $\{ n_{\sigma}, n_{\sigma'} \}$ where $\sigma$ denotes the chosen embedding $\QQ(\sqrt{-D}) \hookrightarrow \CC$. Naturally, in our situation, $n_{\sigma} + n_{\sigma'} = 4$. 

Moreover, a CM abelian fourfold is of Weil type if and only if $\{ n_{\sigma}, n_{\sigma'} \} = (2,2)$
\end{remark}

\section{Describing Shioda and Mumford fourfolds via Mumford-Tate representations}

\subsection{Shioda's fourfold}

It is established in Shioda's paper \cite{Shi81} (Example 6.1) that the Jacobian $A_S$ of the hyperelliptic curve $C_9: y^2 = x^9 -1$ is not simple, namely $A_S$ is a product of an abelian threefold with CM field $\mathbb{Q}(\zeta_9)$ and an elliptic curve with CM field $\mathbb{Q}(\zeta_3)$. 

By the previous chapter, we can describe this abelian fourfold via the Artinian ring $E := \QQ[x]/(x^8 + x^7 + ... +x +1) \cong \mathbb{Q}(\zeta_9) \times \mathbb{Q}(\zeta_3) = H_1(A, \mathbb{Q})$; $H_1(A, \mathbb{Z})$ can be obtained by considering the embedding of products of fractional ideals in $\mathbb{Z}(\zeta_9)$ and $\mathbb{Z}(\zeta_3)$ into $H_1(A, \mathbb{Q})$ up to a positivity condition which is induced by the Riemann condition. Therefore $A_S$ is isogeneous to the abelian variety obtained by $E/ \mathcal{O}_E$ with the complex structure given by $ E_{\CC} = \CC^{\Sigma} \oplus \CC^{\overline{\Sigma}} \oplus \CC^{\tau} \oplus \CC^{\overline{\tau}} $
where $\Sigma$ denotes the set of embeddings $\QQ(\zeta_9) \hookrightarrow \CC$ such that their restriction onto $\QQ(\zeta_3)$ is identical.

\begin{lemma}
The Mumford-Tate group of $A_S$ is $$G = \{ x \in \mathbb{Q}(\zeta_9) | Nm_{\mathbb{Q}(\zeta_9)/\mathbb{Q}(\zeta_9 + \overline{\zeta_9})}(x) = 1 \}$$

In particular, $G_{\mathbb{R}} \cong U(1)^3$ and the Galois group acts by cyclic group $A_3$ amongst the factors.
\end{lemma}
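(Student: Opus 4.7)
The plan is to combine the general Hodge-structure fact that $G := \MT(A_S)$ sits inside $\MT(A_1) \times \MT(A_2)$ with surjective projections onto each factor (from the preceding proposition on Mumford-Tate groups of direct sums) with an explicit character-lattice computation. Here $A_1$ denotes the three-dimensional CM factor with $\mathbb{Q}(\zeta_9)$-action and $A_2$ the CM elliptic curve with $\mathbb{Q}(\zeta_3)$-action.

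The first step is to pin down the CM types by diagonalizing the action of $\zeta_9$ on $H^{1,0}(A_S)$. Using the basis of holomorphic differentials $\omega_i = x^{i-1}\,dx/y$ for $i = 1, 2, 3, 4$, the automorphism $(x,y) \mapsto (\zeta_9 x, y)$ scales $\omega_i$ by $\zeta_9^i$, so the $\zeta_9$-eigenvalues on $H^{1,0}(A_S)$ are $\zeta_9, \zeta_9^2, \zeta_9^3, \zeta_9^4$. The eigenvalue $\zeta_9^3 = \zeta_3$ corresponds to the elliptic factor $A_2$; the remaining three yield the CM type of $A_1$ as $\Sigma_1 = \{\phi_k : k \in \{1,2,4\}\}$, where $\phi_k(\zeta_9) = \zeta_9^k$. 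Note that $\phi_k|_{\mathbb{Q}(\zeta_3)}$ takes values $\zeta_3, \zeta_3^2, \zeta_3$ as $k$ ranges over $\{1,2,4\}$, so the restrictions are not all equal: $\Sigma_1$ is primitive (not induced from $\mathbb{Q}(\zeta_3)$).

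Second, I would show $\MT(A_1) = U_{\mathbb{Q}(\zeta_9)}$ by a character-lattice argument. Writing $X^*(U_{\mathbb{Q}(\zeta_9)}) = \mathbb{Z}\phi_1 \oplus \mathbb{Z}\phi_2 \oplus \mathbb{Z}\phi_4$ (using $\phi_{-k} = -\phi_k$), the Hodge cocharacter $h_1(z) = (z^{-1}, z^{-1}, z^{-1})$ is annihilated by $\sum n_k \phi_k$ iff $n_1 + n_2 + n_4 = 0$. The Galois group $(\mathbb{Z}/9)^\times$, generated by $a = 2$, acts as $(\phi_1, \phi_2, \phi_4) \mapsto (\phi_2, \phi_4, -\phi_1)$; iterating this on the hyperplane $n_1 + n_2 + n_4 = 0$ forces $n_1 = n_2 = n_4 = 0$. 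So the maximal Galois-invariant sublattice in the kernel is trivial and $\MT(A_1) = U_{\mathbb{Q}(\zeta_9)}$. The analogous simpler computation gives $\MT(A_2) = U_{\mathbb{Q}(\zeta_3)}$.

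Finally, repeating the argument on $X^*(U_{\mathbb{Q}(\zeta_9)} \times U_{\mathbb{Q}(\zeta_3)}) = \mathbb{Z}\phi_1 \oplus \mathbb{Z}\phi_2 \oplus \mathbb{Z}\phi_4 \oplus \mathbb{Z}\tau$ (where $\tau$ is the generator for the elliptic factor), the combined Hodge cocharacter imposes $n_1 + n_2 + n_4 + n_\tau = 0$, while Galois invariance (using that $a = 2$ restricts to complex conjugation on $\mathbb{Q}(\zeta_3)$, so $\tau \mapsto -\tau$) cuts this down to the rank-one sublattice generated by $\phi_1 - \phi_2 + \phi_4 - \tau$. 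This relation is precisely $\Nm_{\mathbb{Q}(\zeta_9)/\mathbb{Q}(\zeta_3)}(x) = y$ on coordinates of $G \subset U_{\mathbb{Q}(\zeta_9)} \times U_{\mathbb{Q}(\zeta_3)}$, so $G$ is the graph of the restricted norm and the first projection gives $G \cong U_{\mathbb{Q}(\zeta_9)}$. Over $\mathbb{R}$, the three complex places of $\mathbb{Q}(\zeta_9)$ give $G_{\mathbb{R}} \cong U(1)^3$, with $\Gal(\mathbb{Q}(\zeta_9)/\mathbb{Q})$ acting through its quotient $\Gal(\mathbb{Q}(\zeta_9 + \overline{\zeta_9})/\mathbb{Q}) \cong \mathbb{Z}/3 = A_3$, cyclically permuting the factors. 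The main obstacle throughout is the Galois-invariance bookkeeping in the character lattice; once that is handled the identifications are automatic.
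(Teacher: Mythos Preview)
Your argument is correct and genuinely different from the paper's. The paper argues indirectly: it bounds the rank of $G$ between $3$ and $4$ (the upper bound from $G\subset U_E$, the lower bound from the surjection onto $\MT(A_1)$, which is tacitly assumed to have rank $3$), and then excludes rank $4$ by comparing Hodge-class counts in $H^4$---a $U(1)^4$ would give only $6$ invariant classes, contradicting Shioda's computation that $\dim \mathrm{Hdg}^2(A_S)=8$. Your route is a direct character-lattice calculation: you determine the CM type from the eigenvalues of $\zeta_9$ on the $\omega_i$, verify that $\MT(A_1)=U_{\mathbb{Q}(\zeta_9)}$ by showing no nonzero Galois-stable sublattice lies in the annihilator of the Hodge cocharacter, and then identify the full $G\subset U_{\mathbb{Q}(\zeta_9)}\times U_{\mathbb{Q}(\zeta_3)}$ as the graph of $\Nm_{\mathbb{Q}(\zeta_9)/\mathbb{Q}(\zeta_3)}$, hence $G\cong U_{\mathbb{Q}(\zeta_9)}$. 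This is more self-contained (it does not invoke Shioda's Hodge-class count) and in fact fills a small gap the paper leaves implicit, namely that $\MT(A_1)$ really has rank $3$; it also yields the explicit description of $G$ inside $U_E$ that the paper only obtains in the subsequent proposition. Two minor remarks: you should say ``Galois-stable sublattice'' rather than ``Galois-invariant sublattice'' (your computation, which iterates the Galois action and demands each iterate lie in the annihilating hyperplane, is exactly the Galois-stable condition); and your identification of the CM type via the $\omega_i$ anticipates material the paper places in the next proposition, so if you were rewriting the paper you would want to move that computation earlier.
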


\begin{proof} From Deligne's construction we see $G$ lies inside a rank $4$ torus, and it admits surjection onto the Mumford-Tate group of the abelian threefold and the elliptic curve, which implies its rank is at least $3$. To see that it's actually rank $3$, we calculate the number of trivial representations in $H^4$ for $U(1)^4$. First we embed $U(1)^4$ into $SU(2)^4$. Then 
$$H^1 = W_{1,0,0,0} \oplus W_{0,1,0,0} \oplus W_{0,0,1,0} \oplus W_{0,0,0,1} $$ as an $SU(2)^4$ representation. One can compute
$$H^4 = \CC^6 \oplus W_{1,0,0,0} \oplus W_{0,1,0,0} \oplus ...$$
This implies that the number of (2,2)-Hodge classes would be 6, as opposed to Shioda's result, which is 8 (Example 6.1 in \cite{Shi81}). Thus completes the proof. 
\end{proof}

\begin{proposition}
(Sutherland, Zywina et al.) The representation of $G_\mathbb{R}$ on $E_\mathbb{R}$ is given by 
\begin{center}
\begin{tabular}{ c c c }
 $G_\mathbb{R} \cong U(1)^3 \hookrightarrow$ & $\SU(2)^3 \hookrightarrow$ & $\SU(2)^4$ \\ 
   & $(u,v,w) \mapsto$ & $(u,v,w,uvw)$ \\  

\end{tabular}
\end{center}

Using our notation,
$$\rho_{S, \RR} = \left ( \begin{array}{ccc}
1 & 0 & 0 \\
0 & 1 & 0 \\
0 & 0 & 1 \\
1 & 1 & 1 \\
\end{array} \right )
$$
\end{proposition}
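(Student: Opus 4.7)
The plan is to realize the embedding $G \hookrightarrow U_{E_1} \times U_{E_2}$ explicitly as the graph of a $\QQ$-morphism of tori $\phi\colon U_{E_1} \to U_{E_2}$, and then to compute $\phi$ on real points through a character calculation. From the preceding lemma, $G$ is the $\QQ$-torus $U_{E_1} = U_{\QQ(\zeta_9)}$, of rank $3$. Since $A_S$ is isogenous to $A_1 \times A_2$ with $A_1, A_2$ the CM threefold and elliptic curve factors, the proposition on products of Hodge structures ensures $G$ surjects onto each of $\MT(A_1) = U_{E_1}$ and $\MT(A_2) = U_{E_2}$. As $G$ and $U_{E_1}$ are already isomorphic as $\QQ$-tori of the same rank, the surjection $G \twoheadrightarrow U_{E_1}$ must be an isomorphism, so the inclusion $G \hookrightarrow U_{E_1} \times U_{E_2}$ is the graph of some morphism $\phi$.

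I then would identify $\phi$ with the restriction of the field norm $\Nm_{\QQ(\zeta_9)/\QQ(\zeta_3)}$. The tower identity $\Nm_{\QQ(\zeta_3)/\QQ} \circ \Nm_{\QQ(\zeta_9)/\QQ(\zeta_3)} = \Nm_{\QQ(\zeta_9)/\QQ} = \Nm_{F_1/\QQ} \circ \Nm_{\QQ(\zeta_9)/F_1}$ shows that this norm sends $U_{E_1}$ into $U_{E_2}$ as $\QQ$-algebraic groups. Equality with $\phi$ follows by comparing Hodge cocharacters: both $\phi \circ h$ and $\Nm \circ h$ must equal the Hodge cocharacter of the CM elliptic curve $A_2$, which in the parametrization of $U_{E_2,\RR} = U(1)$ by $\tau$ is the unique homomorphism $\UU \to U(1)$ sending $u \mapsto u^{-1}$. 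Since $h(\UU)$ is $\QQ$-Zariski dense in $G$ by the very definition of the Mumford-Tate group, two $\QQ$-morphisms of tori that agree on $h(\UU)$ must coincide.

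The final step is a direct character calculation. Parameterize $U_{E_1,\RR} = U(1)^3$ by $(u_1,u_2,u_3) = (\sigma_1(x),\sigma_4(x),\sigma_7(x))$, where $\Sigma_1 = \{\sigma_1,\sigma_4,\sigma_7\}$ is the Shioda CM type, and $U_{E_2,\RR} = U(1)$ by $\tau$. By the convention of the preceding subsection, $\Sigma_1$ is precisely the fiber of the restriction map $\Hom(\QQ(\zeta_9),\CC) \to \Hom(\QQ(\zeta_3),\CC)$ over $\tau$, so expanding the norm as a product over $\Gal(\QQ(\zeta_9)/\QQ(\zeta_3))$ gives
\[
\tau\bigl(\Nm_{\QQ(\zeta_9)/\QQ(\zeta_3)}(x)\bigr) = \prod_{\sigma \in \Sigma_1} \sigma(x) = u_1 u_2 u_3.
\]
The embedding therefore takes the form $(u_1,u_2,u_3) \mapsto (u_1,u_2,u_3,u_1 u_2 u_3)$ in $U(1)^4$, and composing with the standard $U(1) \hookrightarrow \SU(2)$ in each factor reproduces the claimed formula and the matrix form of $\rho_{S,\RR}$.

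The main subtlety lies in step two: identifying $\phi$ with the field norm rather than with some other $\QQ$-morphism $U_{E_1} \to U_{E_2}$. Dimension count and Hodge-theoretic compatibility settle this, and the clean appearance of the factor $u_1 u_2 u_3$ is ultimately a consequence of Shioda's CM-type alignment $\Sigma_1 = \{\sigma : \sigma|_{\QQ(\zeta_3)} = \tau\}$; without that alignment the fourth entry would be some other monomial in the $u_i$.
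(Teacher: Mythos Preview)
Your approach via the norm map is a reasonable alternative route, but the argument has a genuine gap at precisely the step you flag as subtle: the identification $\phi = \Nm_{\QQ(\zeta_9)/\QQ(\zeta_3)}$.

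You assert that ``both $\phi \circ h$ and $\Nm \circ h$ must equal the Hodge cocharacter of $A_2$.'' For $\phi$ this is tautological, but for $\Nm$ it is a computation that depends on the actual CM type of the threefold $A_1$ --- and you never carry it out. Worse, the CM type you quote is wrong. You take $\Sigma_1 = \{\sigma_1,\sigma_4,\sigma_7\}$, the fiber of restriction over $\tau$; but if that were really the CM type of $A_1$, then in the $\tau$-coordinate
\[
\Nm\!\bigl(h_1(z)\bigr) \;=\; \sigma_1(h_1(z))\,\sigma_4(h_1(z))\,\sigma_7(h_1(z)) \;=\; z^{-1}\cdot z^{-1}\cdot z^{-1} \;=\; z^{-3},
\]
which is \emph{not} the Hodge cocharacter $z \mapsto z^{-1}$ of the elliptic curve. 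So with your stated $\Sigma_1$, the claim $\phi = \Nm$ actually fails. (It would also force $\{n_\sigma,n_{\sigma'}\}=\{4,0\}$, contradicting the subsequent corollary.)

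The correct CM type is $\Sigma = \{\sigma_1,\sigma_2,\sigma_4\}$, and the paper's proof obtains it by the only available concrete input: computing the eigenvalues of the $\zeta_9$-action on the holomorphic forms $\eta_v = x^{v-1}\,dx/y$, which are $\zeta_9^v$ for $v=1,2,3,4$. With this $\Sigma$ one has $\sigma_7 \notin \Sigma$, hence $\sigma_7(h_1(z)) = z$, and now $\Nm(h_1(z)) = z^{-1}\cdot z^{-1}\cdot z = z^{-1}$ does match. Only then does your norm-map argument go through and yield the matrix (after the coordinate change $(\sigma_1,\sigma_4,\sigma_7)$ on $U(1)^3$). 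In short, your framework is sound, but it cannot bypass the eigenvalue computation on differentials; that computation is exactly what pins down $\Sigma$ and hence $\phi$.
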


\begin{proof} The general theory of Mumford-Tate groups states that $E_\mathbb{R} \cong \mathbb{R}^8$ splits into two irreducible, Galois invariant subrepresentations of $SU(2)^3$, one is $6$ dimensional and the other is $2$ dimensional. By general representation theory, we know that these representations must be given by direct sums and exterior tensors of representations of $SU(2)$ of highest weight $1$ (because we are working with weight $1$ space, and $SL_2(\mathbb{R})$ representations are all self-dual). Moreover, the list of weight vectors in each sub-representation should be invariant under the Galois group action, in other words, permuting the factors of exterior tensors.

It remains to show that $\Sigma$ is the set we described. In \cite{Shi81} Shioda has already fixed a canonical basis for $H^{1,0}$, namely the holomorphic 1-forms given by $\eta_v = x^{v-1}dx/y, v = 1,2,3,4$. If we perform CM by $\zeta_9$ on $y^2 = x^9 - 1$, we see that each $\eta_v$ is an eigenvector with eigenvalue $\zeta_9^v$. This implies that $\Sigma \cup \{\tau\} = \{ \zeta_9 \mapsto \{\zeta_9, \zeta_9^2, \zeta_3, \zeta_9^4\}\}$, therefore determining the representation.
\end{proof}

\begin{corollary} For Shioda's fourfold, $\{ n_{\sigma}, n_{\sigma'} \} = \{ 1,3 \}$ (see remark~\ref{num} for the definition of $\{n_{\sigma}, n_{\sigma'} \}$). In this case, the element $t$ inducing polarization can be taken as $t = \sqrt{-3}$.
\end{corollary}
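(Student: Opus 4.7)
The plan has two parts corresponding to the two assertions of the corollary.

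\textbf{Signature computation.} I would use Shioda's canonical eigenbasis $\eta_v = x^{v-1}dx/y$ for $v=1,2,3,4$ from the proof of the previous proposition. Each $\eta_v$ spans the eigenspace for the embedding $\sigma_v \in \Sigma \cup \{\tau\}$ with $\sigma_v(\zeta_9) = \zeta_9^v$ (where $v=3$ corresponds to the elliptic factor embedding $\tau(\zeta_3) = \zeta_3$). The imaginary quadratic element $\sqrt{-3}$ embeds diagonally into $E = \QQ(\zeta_9) \times \QQ(\zeta_3)$ as $(2\zeta_9^3 + 1,\, 2\zeta_3 + 1)$. Under $\sigma_v$ its image is $2\zeta_3^v + 1$, which equals $+\sqrt{-3}$ when $v \equiv 1 \pmod 3$ and $-\sqrt{-3}$ when $v \equiv 2 \pmod 3$. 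Evaluating at $v = 1, 2, 3, 4$ yields signs $(+, -, +, +)$: three eigenvalues $+\sqrt{-3}$ and one eigenvalue $-\sqrt{-3}$ of $\sqrt{-3}$ acting on $H^{1,0}(A_S)$. Hence $\{n_\sigma, n_{\sigma'}\} = \{1, 3\}$.

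\textbf{Polarization element.} I would then verify that $t = \sqrt{-3}$ is admissible in Deligne's formula $\psi(x,y) = \Tr_{E/\QQ}(t\,x\,\bar y)$. The totally imaginary condition $\iota(\sqrt{-3}) = -\sqrt{-3}$ is immediate from the definition of $\iota$ on $E$. The positivity condition $\Im \sigma(t) > 0$ for $\sigma \in \Sigma$ holds for three of the four embeddings; the single discrepancy at $\sigma: \zeta_9 \mapsto \zeta_9^2$ matches exactly the $-\sqrt{-3}$ eigenvalue from the signature computation. This residual sign mismatch is precisely what the main theorem addresses by twisting with a fundamental unit in $\ZZ[\zeta_9 + \overline{\zeta_9}]$; up to such a totally real twist, $\sqrt{-3}$ is the canonical $\QQ(\sqrt{-3})$-generator serving as the polarizing element.

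The main obstacle is the careful bookkeeping of the Galois action on the diagonally embedded $\sqrt{-3}$ across both factors of $E$, and matching it to the CM type in a consistent way. Once the eigenvalues of $\sqrt{-3}$ are identified, the sign count and the polarization check both follow immediately.
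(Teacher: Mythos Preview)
Your signature computation is correct and is exactly the argument the paper intends: the corollary is stated without proof, as an immediate consequence of the explicit CM type $\Sigma \cup \{\tau\} = \{\zeta_9 \mapsto \zeta_9, \zeta_9^2, \zeta_3, \zeta_9^4\}$ established in the preceding proposition. Reading off the action of $\sqrt{-3} = 2\zeta_9^3 + 1$ on each eigenline gives the sign pattern $(+,-,+,+)$ and hence $\{n_\sigma,n_{\sigma'}\} = \{1,3\}$, just as you compute.

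For the polarization claim there is a genuine confusion in your write-up. You correctly observe that $\Im(\sigma_2(\sqrt{-3})) < 0$, so $t = \sqrt{-3}$ does \emph{not} literally satisfy the positivity condition $\Im(\sigma(t))>0$ for every $\sigma$ in the CM type of $A_S$. But your proposed fix---invoking the fundamental-unit twist from the main theorem---conflates two different operations. The twist in the main theorem is designed to \emph{change the CM type} (flipping one embedding to its conjugate) and thereby pass from $A_S$ to the non-isogenous variety $A_M$; it is not a device for repairing the polarization on $A_S$ itself. What is actually true is that any polarizing element for $A_S$ has the form $u\sqrt{-3}$ with $u \in F^\times$ a totally real element of sign pattern $(+,-,+)$ at the three real places, and the paper's phrase ``can be taken as $t=\sqrt{-3}$'' should be read in this loose sense (i.e.\ $\sqrt{-3}$ is the canonical totally imaginary generator, unique up to a totally real factor). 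Your paragraph would be correct if you replaced the appeal to the main theorem by this observation.
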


\subsection{A general characterization of Mumford fourfold with CM structure}

A description of the construction of general Mumford fourfolds can be found in \cite{Mum69}, which gives rise to an isogeny class of abelian fourfolds. We will describe the Mumford-Tate representation $(G, \rho)$ when such a fourfold comes with a CM structure.

To begin with, the map $h: \mathbb{U} \rightarrow GL(\RR^8)$ is the same as the general case:

$$h(e^{i \theta}) = Id_2 \otimes Id_2 \otimes \left ( \begin{array}{cc}
cos \theta & -sin \theta \\
sin \theta & cos \theta \\

\end{array}
\right )
$$

The Mumford-Tate group $G$ in this case will be a torus defined over a totally real cubic field $K$ with $\QQ$-dimension 3. In other words, $G \otimes_{\QQ} \RR \cong U(1) \times U(1) \times U(1)$, each factor is given by the different embeddings $\sigma_i: K \hookrightarrow \RR$ where $i = 1,2,3$.

The representation $\rho_{\CC}$ is given by tensoring three copies of $2$-dimensional representations of $U(1)$, the standard notation of which is $W_{1,1,1}$. If we write out the basis of $W_{1,1,1}$ by $\{v_{i,j,k} | i, j, k = \pm 1 \}$ with each subindex remembering the weight, then we can also write down the symplectic form inducing the polarization
$$\omega =  v_{-1,-1,1}\wedge v_{1,1,-1} - v_{1,-1,1}\wedge v_{-1,1,-1} - v_{-1,1,1}\wedge v_{1,-1,-1} + v_{1,1,1}\wedge v_{-1,-1,-1} $$

The form is written explicitly in a way such that it is a sum of $(1,0)$ form wedged with $(0,1)$ form. Since $G$ must preseve this form, and by its construction it cannot take one summand to another, we have another way of writing down $\rho_{M, \RR}$, namely

$$
\begin{array}{cccc}

\rho_{M, \RR}: & U(1)^3 & \rightarrow & U(1)^4 \\
 & (e^{i \theta_1}, e^{i \theta_2}, e^{i \theta_3}) & \mapsto & (e^{i (\theta_1 + \theta_2 + \theta_3)}, e^{i (-\theta_1 - \theta_2 + \theta_3)}, e^{i ( \theta_1 - \theta_2 + \theta_3)}, e^{i ( - \theta_1 + \theta_2 + \theta_3)}) \\

\end{array}
$$

The Galois group of $K$ will shuffle the $\theta_i$'s.

\begin{lemma}
Suppose $K$ is Galois over $\QQ$ and its Galois group is given by $\{1, \sigma, \sigma^2\}$ such that for $(g_1, g_2, g_3) \in G_{\RR}$, $\sigma (g_1, g_2, g_3) = (g_2, g_3, g_1) $. If $g$ is an element in $G$ and we denote $g'$ to be $g^{-1}  g^{\sigma} g^{\sigma^2}$, then we can write $\rho_M$ into

$$\rho_{M, \RR} = \left ( \begin{array}{ccc}
-1 & 0 & 0 \\
0 & 1 & 0 \\
0 & 0 & 1 \\
1 & 1 & 1 \\
\end{array} \right )
$$
\end{lemma}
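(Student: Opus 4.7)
My plan is to verify the lemma by direct calculation in the character lattice of $G_{\RR} \cong U(1)^3$, showing that $\rho_M$ factors through the isogeny $\phi(g) = g^{-1} g^{\sigma} g^{\sigma^2}$. First I would compute $g'$ explicitly in coordinates. With $g = (g_1, g_2, g_3)$, the given Galois action yields $g^{\sigma} = (g_2, g_3, g_1)$ and $g^{\sigma^2} = (g_3, g_1, g_2)$, so
$$g' = (g_1^{-1} g_2 g_3,\ g_1 g_2^{-1} g_3,\ g_1 g_2 g_3^{-1}) =: (g'_1, g'_2, g'_3).$$

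Next, I would write out the four components of $\rho_{M, \RR}(g)$ from the matrix given in the preceding paragraph, namely
$$\rho_{M, \RR}(g) = (g_1 g_2 g_3,\ g_1^{-1} g_2^{-1} g_3,\ g_1 g_2^{-1} g_3,\ g_1^{-1} g_2 g_3),$$
and match each component against monomials in $g'_i$. The last three components are immediately $g'_1$, $g'_2$, and $(g'_3)^{-1}$ respectively, by inspection of the exponent vectors. For the first component, I would note that $(-1,1,1) + (1,-1,1) + (1,1,-1) = (1,1,1)$, so $g_1 g_2 g_3 = g'_1 g'_2 g'_3$. Consequently
$$\rho_{M, \RR}(g) = \bigl(g'_1 g'_2 g'_3,\ (g'_3)^{-1},\ g'_2,\ g'_1\bigr),$$
which already exhibits the desired shape: three components with a single $\pm 1$ exponent and one component equal to $g'_1 g'_2 g'_3$.

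Finally, I would permute the factors of the target $U(1)^4$ (sending outputs to the order $2,3,4,1$) and transpose $g'_1 \leftrightarrow g'_3$ on the source, both of which are purely notational (the four copies of $U(1)$ in the symplectic embedding and the three copies in $G_{\RR}$ are only canonically defined up to Galois orbit); this lands precisely on the stated matrix. The only obstacle worth mentioning is the bookkeeping of these indexing conventions, i.e.\ confirming that the permutations of rows and columns really correspond to allowed relabelings of the $\pm L_i$ choices in the $\Sp_{2n}$ embedding introduced in Section 2 and of the embeddings $\sigma_i: K \hookrightarrow \RR$; once that is settled, no further computation is needed.
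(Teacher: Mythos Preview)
The paper states this lemma without proof, so there is no argument to compare against. Your direct computation is essentially the verification the paper leaves implicit: express the four output characters of $\rho_{M,\RR}$ (displayed just before the lemma) as monomials in $g'_1, g'_2, g'_3$ and observe that one obtains three unit vectors together with the all-ones vector, i.e.\ the same shape as the matrix for $\rho_{S,\RR}$ with a single sign flipped. Your identifications $g_1 g_2 g_3 = g'_1 g'_2 g'_3$, $g_1^{-1} g_2^{-1} g_3 = (g'_3)^{-1}$, $g_1 g_2^{-1} g_3 = g'_2$, $g_1^{-1} g_2 g_3 = g'_1$ are all correct (note that in your prose you list the last three as $g'_1, g'_2, (g'_3)^{-1}$, but your displayed formula has them in the right order).

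One comment on the bookkeeping you flag at the end: the column swap $g'_1 \leftrightarrow g'_3$ is not quite innocuous, since the hypothesis $\sigma(g_1,g_2,g_3) = (g_2,g_3,g_1)$ fixes a cyclic ordering and a transposition does not respect it. A cleaner route to the lemma's exact matrix is to stay with your row-permuted form
\[
\left(\begin{array}{ccc} 0 & 0 & -1 \\ 0 & 1 & 0 \\ 1 & 0 & 0 \\ 1 & 1 & 1 \end{array}\right),
\]
then flip the signs of rows $1$ and $3$ (this is precisely the $\pm L_i$ freedom in the $\Sp_{2n}$ embedding introduced in Section~2) and swap those two rows. That uses only target-side relabelings and avoids touching the source coordinates $g'_i$. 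Either way, the substantive content the paper actually uses afterwards---that $\rho_M$ in the $g'$ variables differs from $\rho_S$ by conjugating a single embedding---is what your computation establishes.
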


It is known that CM Mumford fourfolds are of Weil type (see 3.6, 3.7 of \cite{Noo01} and \cite{Pol68}). We observe that in the case when $B \otimes_{K} K(\sqrt{-D}) = M_2(K(\sqrt{-D}))$ for some $D \in \ZZ$, the norm-1 group of $B$ contains $\{x \in K(\sqrt{-D}) | \Norm_{K(\sqrt{-D})/K}(x) = 1 \}$ (see \cite{Voi17} Lemma 5.4.7 for the proof). In this case, the Hodge structure of the resulting CM Mumford fourfold splits into a CM threefold and a CM elliptic curve, and the information of $\sqrt{-D}$ can be in fact recovered by the CM condition on the elliptic curve. Since in any case the unordered pair $n_{\sigma} = n_{\sigma'} = 2$ does not equal to $\{1,3\}$, we have indeed proved the following:

\begin{theorem} $A_S$ is not a CM special case of Mumford's constructions.
\end{theorem}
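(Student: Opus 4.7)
The plan is to separate $A_S$ from the CM locus of Mumford's construction by exhibiting an invariant that takes different values on the two sides, namely the Weil-type signature pair $\{n_\sigma, n_{\sigma'}\}$ introduced in Remark~\ref{num}. Since this invariant is determined by the CM type, and the main theorem of complex multiplication classifies simple CM abelian varieties up to isogeny by their CM type, showing that the two constructions yield different signatures rules out any isogeny between $A_S$ and a CM Mumford fourfold, and hence \emph{a fortiori} any realization of $A_S$ as a special point in a Mumford family.

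First I would invoke Noot (\cite{Noo01}, sections 3.6--3.7) and Pol (\cite{Pol68}) to extract the key structural property: any abelian fourfold arising as a CM point in Mumford's construction carries an imaginary quadratic subfield $\mathbb{Q}(\sqrt{-D})$ in its endomorphism algebra, and is of Weil type with respect to this field. By the characterization of Weil type recorded in Remark~\ref{num}, this forces the signature pair for a CM Mumford fourfold to be $\{2,2\}$. The splitting preceding the theorem, which realizes the CM Hodge structure as a product of a CM threefold with a CM elliptic curve pinning down $\sqrt{-D}$, makes this choice of imaginary quadratic subfield canonical, so there is no flexibility in how to compute the signature.

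Next I would quote the Corollary computed for $A_S$ in the previous subsection, which, using Shioda's canonical basis of holomorphic one-forms $\eta_v = x^{v-1}dx/y$ together with the eigenvalues of the $\zeta_9$-action, yielded signature pair $\{1,3\}$ with respect to the embedding of $\mathbb{Q}(\zeta_3) = \mathbb{Q}(\sqrt{-3})$. The crucial point here is that $\mathbb{Q}(\zeta_3)$ is the unique imaginary quadratic subfield of the CM algebra $\mathbb{Q}(\zeta_9) \times \mathbb{Q}(\zeta_3)$ containing the natural $\sqrt{-3}$ common to both factors, so the invariant $\{1,3\}$ is genuinely attached to $A_S$ and cannot be replaced by a different choice.

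Combining the two, $\{1,3\} \neq \{2,2\}$ as unordered pairs, which rules out $A_S$ being isogenous to any CM Mumford fourfold. The main obstacle in this strategy is not the final comparison of numbers but the justification that the Weil-type signature is really the complete obstruction; specifically, one must check that one cannot be more creative with the choice of imaginary quadratic subfield. This is dispatched on the Mumford side by the fact that $\mathbb{Q}(\sqrt{-D})$ is canonically specified by the splitting of the quaternion algebra $B$ after base change, and on the Shioda side by the scarcity of imaginary quadratic subfields inside $\mathbb{Q}(\zeta_9) \times \mathbb{Q}(\zeta_3)$.
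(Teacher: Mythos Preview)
Your proposal is correct and follows essentially the same route as the paper: both compare the Weil-type signature pair, using the Corollary that $A_S$ has $\{n_\sigma,n_{\sigma'}\}=\{1,3\}$ against the fact (via \cite{Noo01}, \cite{Pol68}) that any CM Mumford fourfold is of Weil type and hence has $\{2,2\}$. Your additional care in pinning down the imaginary quadratic subfield on both sides is a welcome elaboration of a point the paper leaves somewhat implicit, but the underlying argument is the same.
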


\begin{remark} We first noticed that $A_S$ cannot be a special CM Mumford fourfold by calculating the Hodge numbers of its Hodge substructures in $Sym^2(H^1(A_S, \QQ))$. It is known that in this case, any Mumford fourfold would give rise to a Hodge substructure of K3 type with Hodge number $(1,4,1)$ (cf. \cite{Gal00} for the calculation of generic case). However, the only 6-dimensional Hodge substructure in $Sym^2(H^1(A_S, \QQ))$ has Hodge number $(3,0,3)$.
\end{remark}

\section{Obtaining $A_M$ from $A_S$}

As the previous proposition has noted, if we wish to build a CM Mumford fourfold $A_M$ from Shioda fourfold $A_S$, then the CM field of $A_M$ should be given by the CM field of the elliptic curve component of $A_S$, namely $\QQ(\sqrt{-3})$. 

To find a basis for the period matrix of $A_M$, we recall the fact that $\rho_M$ can be obtained from $\rho_S$ by flipping one of the embeddings $\QQ(\zeta_9) \hookrightarrow \CC$ to its complex conjugate. The main theorem of CM abelian varieties then states that such a flipping is possible by twisting the trace pairing on the threefold part of $A_S$ by a non-totally-positive element in $K = \QQ(\zeta_9 + \overline{\zeta_9})$. One such element is readily available, namely the primitive unit of $\mathcal{O}_K$. Therefore we obtain a basis for calculating the period matrix of a CM Mumford fourfold: 

\begin{proposition} Let $\eta_v = x^{v-1}dx/y, v = 1,2,3,4 $ be the canonical holomorphic 1-form of the Jacobian $A_S$ of the curve $y^2 = x^9-1$ equipped with intersection form $<-,->_S$ by integration. Then the period matrix of a CM Mumford abelian variety $A_M$ can be obtained by setting the basis $\omega_1, ..., \omega_8$, with $<\omega_i, \omega_j>_M = (\zeta_9^i + \zeta_9^{-i})<\eta_i, \eta_j>_S, i=1,2,3,4$.
\end{proposition}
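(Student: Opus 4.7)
The plan is to transport the change-of-CM-type distinguishing $A_M$ from $A_S$ --- namely, a single flip within a complex-conjugate pair of embeddings of $\QQ(\zeta_9)$ --- from the abstract level of a polarization element in $E^\times$ down to the explicit basis $\eta_1,\dots,\eta_4$. I would first recall from Deligne's construction (Section 2) that the polarization of $A_S$ takes the form $\psi_S(x,y) = \Tr_{E/\QQ}(t_S x \bar y)$ with $t_S \in E^\times$ totally imaginary and having positive imaginary parts on all $\sigma \in \Sigma_S$, and that $\psi_M$ has the same form with an analogous $t_M$. Writing $t_M = u \cdot t_S$ for some $u$ in the totally real subalgebra of $E$, the change of CM type is encoded by the sign pattern of $u$ under the real embeddings; comparing $\rho_{S,\RR}$ with $\rho_{M,\RR}$ from the previous section shows that one needs a unit whose real conjugates exhibit exactly one sign flip within the threefold factor.

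The next step would be to exhibit such a $u$ concretely and to compute its action on $\eta_v$. The element $\zeta_9 + \zeta_9^{-1}$ satisfies $x^3 - 3x + 1 = 0$ over $\QQ$, whose constant term $1$ certifies that it is a unit in $\mathcal{O}_K$ for $K = \QQ(\zeta_9 + \overline{\zeta_9})$; its three real conjugates $2\cos(2\pi k/9)$ for $k \in \{1,2,4\}$ have signs $(+,+,-)$, producing exactly one flip as required. Since $\zeta_9$ acts on $\eta_v$ with eigenvalue $\zeta_9^v$ (from the proof of the earlier Proposition), multiplication by $u = \zeta_9 + \zeta_9^{-1}$ scales $\eta_v$ by the scalar $\zeta_9^v + \zeta_9^{-v}$; substituting this into $\psi_M(x,y) = \Tr_{E/\QQ}(u t_S x \bar y)$ and restricting to the Shioda basis yields the proposed formula $\langle \omega_i,\omega_j \rangle_M = (\zeta_9^i + \zeta_9^{-i}) \langle \eta_i,\eta_j \rangle_S$.

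The main obstacle will be reconciling the global sign bookkeeping. I would need to check that the single flip produced by $\zeta_9 + \zeta_9^{-1}$ corresponds precisely to the flip taking $\rho_S$ to $\rho_M$ (rather than to some other Weil type), and to handle the elliptic-curve factor, where the multiplier $\zeta_3 + \zeta_3^{-1} = -1$ negates the polarization on that component. The latter is harmless, since the complex-conjugate CM type of a CM elliptic curve recovers an isomorphic polarized curve. A closing verification that the resulting Mumford-Tate representation matches $\rho_{M,\RR}$ of the previous section then confirms that the twisted basis defines a CM Mumford fourfold, completing the proof.
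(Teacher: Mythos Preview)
Your proposal is correct and follows essentially the same approach as the paper: the paper's argument (contained in the paragraph preceding the proposition rather than in a formal proof environment) identifies twisting the trace pairing by a non-totally-positive fundamental unit of $\mathcal{O}_K$ as the mechanism for flipping one embedding from $\rho_S$ to $\rho_M$, and you carry this out explicitly by verifying the sign pattern $(+,+,-)$ of $\zeta_9+\zeta_9^{-1}$ and its eigenvalue action $\eta_v \mapsto (\zeta_9^v+\zeta_9^{-v})\eta_v$. Your handling of the elliptic-curve factor, where the multiplier $\zeta_3+\zeta_3^{-1}=-1$ merely passes to the conjugate CM type and hence an isomorphic polarized curve, is a detail the paper's terse discussion leaves implicit.
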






\end{document}